%%%%%%%%%%%%%%%%%%%%%%%%%%%%%
% Sunk3Domb.tex; LaTeX file
% Guo-Shuai Mao and Michael J. Schlosser
% ``Supercongruences involving Domb numbers
%     and binary quadratic forms"
% First version: December 23, 2021
% Revised: ***
% Authors' email: (Guo-Shuai Mao) maogsmath@163.com
%                 (Michael J. Schlosser) michael.schlosser@univie.ac.at
%%%%%%%%%%%%%%%%%%%%%%%%%%%%%
\documentclass[12pt,reqno]{amsart}
\usepackage{enumerate, latexsym, amsmath, amsfonts, amssymb, amsthm, color}
\def\pmod #1{\ ({\rm{mod}}\ #1)}

\def\l{\left}
\def\r{\right}
\def\bg{\bigg}
\def\({\bg(}
\def\){\bg)}

\theoremstyle{plain}
\newtheorem{theorem}{Theorem}

\newtheorem{lemma}{Lemma}

\theoremstyle{definition}

\theoremstyle{remark}

\allowdisplaybreaks

% \vspace{4mm}

\begin{document}
\title
[{Supercongruences involving Domb numbers}]
{Supercongruences involving Domb numbers and binary quadratic forms}

\subjclass[2010]{Primary 11A07; Secondary 11B65, 11B83, 11E16}

\keywords{Congruences; binomial coefficients; Domb numbers; binary quadratic forms.}

%\author[Guo-Shuai Mao and Michael J. Schlosser] {Guo-Shuai Mao and Michael J. Schlosser}

%\address {(Guo-Shuai Mao) Department of Mathematics, Nanjing
%University of Information Science and Technology, Nanjing 210044,  People's Republic of China\\
%{\tt maogsmath@163.com  } }

%\address{(Michael J.\ Schlosser) Fakult\"at f\"ur Mathematik, Universit\"at Wien,
%Oskar-Morgenstern-Platz 1, A-1090 Vienna, Austria\\
%{\tt  michael.schlosser@univie.ac.at } }

%\keywords{Congruences; binomial coefficients; Domb numbers; binary quadratic forms.
%\newline \indent {\it Mathematics Subject Classification}. Primary 11A07; Secondary 05A19, 11E25.
%\newline \indent The first author was supported by the Natural Science Foundation of China (grant 12001288). The second author was partially supported by FWF Austrian Science
%Fund grant P 32305.}

\author{Guo-Shuai Mao}
\address{Department of Mathematics, Nanjing
University of Information Science and Technology, Nanjing 210044, 
People's Republic of China}
\email{maogsmath@163.com}
\thanks{The first author was supported by
  the Natural Science Foundation of China (grant 12001288)
  and the China Scholarship Council (202008320187).}

\author{Michael J.\ Schlosser}
\address{Fakult\"at f\"ur Mathematik, Universit\"at Wien,
Oskar-Morgenstern-Platz~1, A-1090 Vienna, Austria}
\email{michael.schlosser@univie.ac.at}
\thanks{The second author was partially supported by FWF Austrian Science
Fund grant P 32305.}

\begin{abstract}
    In this paper, we prove two recently conjectured supercongruences
(modulo $p^3$, where $p$ is any prime greater than $3$)
of Zhi-Hong Sun on truncated sums involving the Domb numbers.
Our proofs involve a number of ingredients such as congruences involving
specialized Bernoulli polynomials, harmonic numbers, binomial coefficients,
and hypergeometric summations and transformations.
\end{abstract}

%\begin{abstract} In this paper, we prove the following two conjectures
%  of Z.-H. Sun \cite[Conjectures 3.5 and 3.6]{s2111}:
% Let $p>3$ be a prime. Then
% \begin{align*}
%\sum_{k=0}^{p-1}k^3\frac{D_k}{4^k}\equiv\begin{cases}-\frac{64}{45}x^2+\frac{32}{45}p+\frac{43p^2}{90x^2} \pmod{p^3} &\tt{if}\ \textit{p}=\textit{x}^2+3\textit{y}^2\equiv1\pmod 3,
%\\ \frac{28}9R_3(p) \pmod{p^2} &\tt{if}\ \textit{p}\equiv2\pmod3\ \mbox{and}\ p\neq5,\end{cases}\\
%\sum_{k=0}^{p-1}k^3\frac{D_k}{16^k}\equiv\begin{cases}\frac{4}{45}x^2-\frac{2}{45}p+\frac{p^2}{45x^2} \pmod{p^3} &\tt{if}\ \textit{p}=\textit{x}^2+3\textit{y}^2\equiv1\pmod 3,
%\\ -\frac{4}9R_3(p) \pmod{p^2} &\tt{if}\ \textit{p}\equiv2\pmod3,\end{cases}
%\end{align*}
%where $D_n=\sum_{k=0}^n\binom{n}k^2\binom{2k}k\binom{2n-2k}{n-k}$ stands for the $n$th Domb number and
%$$
%R_3(p)=\left(1+2p+\frac43(2^{p-1}-1)-\frac32(3^{p-1}-1)\right)\binom{\frac{p-1}2}{\frac{p-5}6}^2.
%$$
%\end{abstract}

\maketitle

\section{Introduction}
\setcounter{lemma}{0}
\setcounter{theorem}{0}
\setcounter{corollary}{0}
\setcounter{remark}{0}
\setcounter{equation}{0}
\setcounter{conjecture}{0}

The \textit{Domb numbers} $\{D_n\}$, defined by
$$D_n=\sum_{k=0}^n\binom{n}k^2\binom{2k}{k}\binom{2n-2k}{n-k}$$
for non-negative integers $n$,
first appeared in an extensive study by C.~Domb~\cite{D}
on interacting particles on crystal lattices. In particular, Domb showed that
$D_n$ counts the number of $2n$-step polygons on the diamond lattice.

The Domb numbes also appear in a variety of other settings, such
as in the coefficients in several known series for $1/\pi$.
For example, from \cite[Equation~(1.3)]{CCL} we know that
$$
\sum_{n=0}^\infty\frac{5n+1}{64^n}D_n=\frac{8}{\sqrt3\pi}.
$$

In \cite[Theorem~3.1]{R}, M.D.~Rogers showed the following generating
function for the Domb numbers by applying a rather intricate method:
$$
\sum_{n=0}^\infty D_nu^n=
\frac1{1-4u}\sum_{k=0}^\infty\binom{2k}k^2
\binom{3k}k\l(\frac{u^2}{(1-4u)^3}\r)^k,
$$
where $|u|$ is sufficiently small.
Y.-P. Mu and Z.-W. Sun \cite[Equation~(1.11)]{MS18} proved a
congruence involving the Domb numbers by applying
the telescoping method:
For any prime $p>3$, we have the supercongruence
$$
\sum_{k=0}^{p-1}\frac{3k^2+k}{16^k}D_k\equiv-4p^4q_p(2)\quad\pmod {p^5},
$$
where $q_p(a)$ denotes the Fermat quotient $(a^{p-1}-1)/p$.

In \cite{liud}, J.-C. Liu proved a couple of conjectures of Z.-W. Sun
and Z.-H. Sun. In particular he confirmed \cite[Theorem~1.3]{liud} that
for any positive integer $n$ the two sums
$$
\frac1n\sum_{k=0}^{n-1}(2k+1)D_k8^{n-1-k}\ \ \ \mbox{and}\ \ \ \frac1n\sum_{k=0}^{n-1}(2k+1)D_k(-8)^{n-1-k}
$$
are also positive integers.

Z.-H. Sun \cite[Conjecture~4.1]{SH2} conjectured the following congruence
for the Domb numbers: Let $p>3$ be a prime. Then
$$
D_{p-1}\equiv64^{p-1}-\frac{p^3}6B_{p-3}\quad\pmod{p^4},
$$
where $\{B_n\}$  are the Bernoulli numbers given by
$$
 B_0=1,\ \ \ \sum_{k=0}^{n-1}\binom{n}{k}B_{k}=0\ \ (n\geq2).
$$
This conjecture was confirmed by the first author and
J.~Wang \cite{mw}.
For more research on Domb numbers, we kindly refer the readers
to \cite{liud,mp3,sund,S2020,S14} (and the references therein).

The main result of this paper is Theorem~\ref{Th1.1}
which contains two supercongruences that were originally conjectured
by Z.-H. Sun in \cite[Conjecture 3.5, Conjecture 3.6]{s2111}.
What makes them interesting is that their formulations involve
the binary quadratic form $x^2+3y^2$ for primes $p$ that are congruent to $1$
modulo $3$. (It is well-known that any prime $p\equiv 1\pmod 3$
can be expressed as $p=x^2+3y^2$ for some integers $x$ and $y$, an assertion
first made by Fermat and subsequently proved by Euler, see \cite{cox}.
In his paper~\cite{s2111}, Sun stated further conjectures of similar type,
involving different moduli, and other binary quadratic forms.)
First, Sun defined
$$
R_3(p)=\left(1+2p+\frac43(2^{p-1}-1)
  -\frac32(3^{p-1}-1)\right)\binom{\frac{p-1}2}{\lfloor p/6\rfloor}^2.
$$
The two supercongruences which we will confirm are as follows.
\begin{theorem}\label{Th1.1} Let $p>3$ be a prime. Then
 \begin{align*}
   &\sum_{k=0}^{p-1}k^3\frac{D_k}{4^k}\\
   &\equiv\begin{cases}-\frac{64}{45}x^2+\frac{32}{45}p+\frac{43p^2}{90x^2}\;\pmod{p^3} &\tt{if}\ \textit{p}=\textit{x}^2+3\textit{y}^2\equiv1\pmod 3,
\\ \frac{28}9R_3(p)\;\pmod{p^2} &\tt{if}\ \textit{p}\equiv2\pmod3\ \mbox{and}\ \textit{p}\neq5,\end{cases}\\
   &\sum_{k=0}^{p-1}k^3\frac{D_k}{16^k}\\
   &\equiv\begin{cases}\frac{4}{45}x^2-\frac{2}{45}p+\frac{p^2}{45x^2}\;\pmod{p^3} &\tt{if}\ \textit{p}=\textit{x}^2+3\textit{y}^2\equiv1\pmod 3,
\\ -\frac{4}9R_3(p)\;\pmod{p^2} &\tt{if}\ \textit{p}\equiv2\pmod3.\end{cases}
\end{align*}
\end{theorem}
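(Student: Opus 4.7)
The plan is to reduce each of the two truncated sums to a single $j$-sum with the kernel $\binom{2j}{j}^2\binom{3j}{j}$ (the kernel appearing in Rogers' generating function recalled above) and then to invoke the body of supercongruences already available for such sums. I would start by inserting the definition of $D_k$ and swapping the order of summation:
$$\sum_{k=0}^{p-1}\frac{k^3}{c^k}D_k=\sum_{j=0}^{p-1}\binom{2j}{j}\sum_{m=0}^{p-1-j}\binom{2m}{m}\binom{j+m}{j}^2\frac{(j+m)^3}{c^{j+m}},$$
for $c\in\{4,16\}$, where $m=k-j$. The inner $m$-sum is a terminating hypergeometric series; a Clausen/Saalsch\"utz-type transformation---the finite analogue of the manipulation Rogers used in \cite{R}---should collapse it into a closed form in $j$ carrying an extra $\binom{3j}{j}$ factor together with polynomial and harmonic corrections.

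\textbf{Single-sum reduction and the binary quadratic form.} After the transformation the target becomes, modulo $p^3$, a single sum of the shape
$$\sum_{j=0}^{p-1}\binom{2j}{j}^2\binom{3j}{j}\,\frac{P_c(j,p)}{C_c^{\,j}},$$
with an explicit constant $C_c$ and a rational polynomial $P_c$ of degree at most three in $j$ whose coefficients carry $p$-adic corrections (coming from the factor $k^3$ and from derivatives of binomial coefficients). Supercongruences modulo $p^3$ for this kernel are by now standard, going back to the work of Z.-W. Sun, Z.-H. Sun and the first author with J.~Wang \cite{mw}; when $p\equiv1\pmod 3$ the principal contribution involves $\binom{(p-1)/2}{(p-1)/6}^2$, which admits a mod $p^3$ expression in terms of $x$ and $p$ (with $p=x^2+3y^2$). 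Inserting this evaluation together with the $k$-weighted versions needed to handle $P_c$ should extract the claimed rational coefficients $-64/45$, $32/45$, $43/90$ for $c=4$ (respectively $4/45$, $-2/45$, $1/45$ for $c=16$). For $p\equiv2\pmod 3$ only mod $p^2$ precision is required; the same reduction produces a multiple of $\binom{(p-1)/2}{\lfloor p/6\rfloor}^2$, and the full prefactor $R_3(p)$ arises naturally once the Fermat quotients $q_p(2)$ and $q_p(3)$ from mod $p^2$ expansions of the binomial kernels are collected.

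\textbf{Main obstacle.} The hardest step will be the mod $p^3$ bookkeeping in the $p\equiv1\pmod 3$ half. At that precision one must track contributions from harmonic numbers $H_j$ and $H_j^{(2)}$, from $p$-adic expansions of $\binom{2j}{j}$ and $\binom{3j}{j}$ modulo $p^3$, and from Bernoulli polynomial values $B_{p-3}(1/6)$, $B_{p-3}(1/3)$ and $B_{p-3}(1/2)$, each of which feeds into the coefficients of $p$ and $p^2/x^2$. Coordinating these contributions so that the spurious $p/x^2$ pieces cancel and the surviving coefficients match the claimed rational numbers is where essentially all of the technical work lies; by contrast, the choice of hypergeometric transformation and the application of the already existing mod $p^3$ evaluations of the $\binom{2k}{k}^2\binom{3k}{k}$ sums should be comparatively routine.
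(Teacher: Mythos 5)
Your plan diverges from the paper at the very first step, and the divergence creates a genuine gap. You insert the defining sum of $D_k$ and swap summation, obtaining an inner sum $\sum_{m=0}^{p-1-m_0}\binom{2m}{m}\binom{j+m}{j}^2(j+m)^3/c^{j+m}$, which you then propose to collapse by a Clausen/Saalsch\"utz-type transformation, calling it ``a terminating hypergeometric series.'' It is not: the summand $\binom{2m}{m}\binom{j+m}{j}^2$ never vanishes, so the inner sum is an artificially truncated partial sum of a non-terminating series (cut at $m=p-1-j$ only because the outer range is $k\le p-1$), and classical terminating transformations such as Pfaff--Saalsch\"utz do not apply to it. No closed form of the shape you describe is available, and no argument is offered for why the truncation error is negligible modulo $p^3$. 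The paper sidesteps this entirely by starting from the \emph{exact} finite transformations of Chan--Zudilin and Z.-H.~Sun, \eqref{2.0} and \eqref{2.1}, which rewrite each $D_n$ itself in terms of the kernel $\binom{2k}{k}^2\binom{3k}{k}$ with precisely the powers $16^{n-k}$ resp.\ $4^{n-2k}$ that cancel against the weights $16^{-n}$, $4^{-n}$; after interchanging summation the inner sum is the purely binomial, Gosper-summable sum $\sum_k k^3\binom{k+j}{3j}$ (resp.\ $\sum_k k^3\binom{k+2j}{3j}$), evaluated in closed form by \texttt{Sigma}, which is what produces the tractable single sums over $j$.

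A second, quantitative gap is your assessment that the resulting single-sum evaluations are ``by now standard'' and ``comparatively routine.'' The reductions lead to sums such as $\sum_{k=0}^{(p-1)/2}\binom{2k}{k}^2/\big((3k+a)16^k\big)$ for $a=1,2,4$ and their $(H_{2k}-H_k)$-weighted analogues modulo $p^3$; the cases $a=4$ (Lemma~\ref{3j+4p^3}) and the harmonic-weighted case (Lemma~\ref{h2jhj3j+4}) are proved in the paper itself via new \texttt{Sigma} identities and the congruences of \cite{mao,mp3}, not quoted from the literature. Moreover, at the indices $j=(p-4)/3$ and $j=(p-1)/3$ the denominators $3j+4$, $3j+1$ are divisible by $p$, so Lemmas~\ref{Lem2.2} and \ref{p2j} cannot be applied termwise there; the paper must isolate these terms as the corrections $S_1$, $S_2$ and evaluate them separately (Lemma~\ref{fuzhu}). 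Your proposal does not acknowledge these singular terms, and without them the mod $p^3$ bookkeeping you describe cannot close.
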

Our preparations for the proof of this theorem consist of seven lemmas that we
give in Section~\ref{sec:pre}. These are used in Section~\ref{sec:thm}, devoted
to the actual proof of Theorem \ref{Th1.1}.
As tools for establishing the results in Sections~\ref{sec:pre} and \ref{sec:thm}
we utilize some congruences from \cite{mao,mp3} and
several combinatorial identities that can be found and proved by the
package \texttt{Sigma} \cite{S} via the software \texttt{Mathematica}.

 \section{Preliminary Lemmas}\label{sec:pre}
 \setcounter{lemma}{0}
\setcounter{theorem}{0}
\setcounter{corollary}{0}
\setcounter{remark}{0}
\setcounter{equation}{0}
\setcounter{conjecture}{0}
Recall that the Bernoulli polynomials $\{B_n(x)\}$ are given by
$$B_n(x)=\sum_{k=0}^n\binom nkB_kx^{n-k}\ \ (n=0,1,2,\ldots),$$
where, as before, $\{B_n\}$ are the Bernoulli numbers.
We will also use the classical Legendre symbol $\big(\frac aq\big)$
(for integer $a$ and odd prime $q$).
The following lemma involving the (generalized) harmonic numbers
can be easily deduced from \cite[Theorem~5.2~(c)]{s2000},
\cite[Theorem~3.9 (ii), (iii), (iv)]{s2008},
\cite[third equation on p.~302]{s2008}, and the simple
identity
$$ 
\sum_{1\le k<\frac{2p}3}\frac 1k=
\sum_{1\le k<\frac{2p}3}\frac{(-1)^{k-1}}k
+\sum_{1\le k<\frac p3}\frac 1k.
$$
\begin{lemma}\label{sunh} Let $p>5$ be a prime. Then
\begin{align*}
H_{\frac{p-1}2}&\equiv-2q_p(2)\;\pmod{p},\\
H_{\lfloor\frac{p}6\rfloor}&\equiv-2q_p(2)-\frac32q_p(3)\;\pmod{p},\\
H_{\lfloor\frac{p}3\rfloor}^{(2)}&\equiv\frac12\left(\frac{p}3\right)B_{p-2}\Big(\frac13\Big)\;\pmod p,\\
H_{\lfloor\frac{p}3\rfloor}&\equiv-\frac32q_p(3)+\frac{3p}4q^2_p(3)-\frac{p}6\left(\frac{p}3\right)B_{p-2}\Big(\frac13\Big)\;\pmod{p^2},\\
H_{\lfloor\frac{2p}3\rfloor}&\equiv-\frac32q_p(3)+\frac{3p}4q^2_p(3)+\frac{p}3\left(\frac{p}3\right)B_{p-2}\Big(\frac13\Big)\;\pmod{p^2}.
\end{align*}
\end{lemma}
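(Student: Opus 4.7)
The plan is to assemble the five congruences from previously-established evaluations of Z.-H.~Sun, as flagged in the paragraph preceding the lemma. First, the classical Eisenstein congruence $H_{(p-1)/2} \equiv -2q_p(2) \pmod p$ is a direct specialization of \cite[Theorem~5.2~(c)]{s2000}. Second, the formulas for $H_{\lfloor p/3 \rfloor}^{(2)}$ modulo $p$ and for $H_{\lfloor p/3 \rfloor}$ modulo $p^2$ appear as special cases of the general harmonic-sum evaluations collected in \cite[Theorem~3.9]{s2008}; I would quote parts (iii) and (iv) of that theorem and read off the coefficients of $q_p(3)$, $q_p(3)^2$, and $B_{p-2}(1/3)$.

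Third, for the congruence for $H_{\lfloor 2p/3 \rfloor}$ modulo $p^2$, I would apply the telescoping identity
$$
\sum_{1\le k<\frac{2p}3}\frac 1k=
\sum_{1\le k<\frac{2p}3}\frac{(-1)^{k-1}}k
+\sum_{1\le k<\frac p3}\frac 1k
$$
stated in the paragraph preceding the lemma (which simply isolates the even-index contribution and halves it), substitute the mod-$p^2$ evaluation of the alternating sum from the third equation on p.~302 of \cite{s2008}, and then plug in the mod-$p^2$ expression for $H_{\lfloor p/3 \rfloor}$ obtained in the previous step. Fourth, the Lehmer-type congruence for $H_{\lfloor p/6 \rfloor}$ modulo $p$ would be obtained either as a direct instance of the corresponding formula in \cite[Theorem~5.2]{s2000}, or by splitting $H_{(p-1)/2}$ according to the residue class of the summation index modulo $3$ and combining with the mod-$p$ reductions of $H_{\lfloor p/3 \rfloor}$ and $H_{\lfloor 2p/3 \rfloor}$ already in hand; the two Fermat quotients $q_p(2)$ and $q_p(3)$ then arise naturally from the mod-$p$ values of $H_{(p-1)/2}$ and $H_{\lfloor p/3 \rfloor}$ respectively.

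The main obstacle I anticipate is careful bookkeeping of the floor functions, which take different closed forms according to whether $p \equiv 1$ or $p \equiv 5 \pmod 6$; the Legendre symbol factor $\big(\frac p3\big)$ in the last three congruences is precisely what absorbs this dichotomy, and one must verify that signs and constants align in both residue classes. Apart from this indexing care and a small amount of arithmetic to combine Fermat quotients, no new analytical ideas are needed---all the substantive content already resides in Sun's cited work.
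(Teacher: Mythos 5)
Your proposal is correct and follows essentially the same route as the paper, which likewise derives all five congruences by citing \cite[Theorem~5.2~(c)]{s2000}, \cite[Theorem~3.9]{s2008}, the alternating-sum evaluation on p.~302 of \cite{s2008}, and the displayed telescoping identity relating $H_{\lfloor 2p/3\rfloor}$, the alternating sum, and $H_{\lfloor p/3\rfloor}$. The paper gives no further detail beyond these citations, so your assembly plan (including the floor-function bookkeeping via the Legendre symbol) is exactly what is intended.
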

\begin{lemma}\label{Lem2.2} Let $p>2$ be a prime. If $0\leq j\leq (p-1)/2$, then we have
$$
\binom{3j}j\binom{p+j}{3j+1}\equiv \frac{p}{3j+1}(1-pH_{2j}+pH_j)\;\pmod{p^3}.
$$
\end{lemma}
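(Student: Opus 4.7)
The plan is to expand $\binom{p+j}{3j+1}$ as a ratio of products of consecutive integers centered at $p$, isolate the factor $p$ coming from the middle term, and Taylor-expand what remains in powers of $p$. Writing
$\binom{p+j}{3j+1}=\frac{1}{(3j+1)!}\prod_{i=-2j}^{j}(p+i)$ and pulling out the $i=0$ factor, a routine rearrangement of the negative- and positive-index blocks (using $\prod_{k=1}^{2j}(p-k)=(2j)!\prod_{k=1}^{2j}(1-p/k)$) collapses the factorials after multiplying by $\binom{3j}{j}$ to give
\[
  \binom{3j}{j}\binom{p+j}{3j+1}=\frac{p}{3j+1}\,X,\qquad X:=\prod_{i=1}^{j}\left(1+\frac{p}{i}\right)\prod_{k=1}^{2j}\left(1-\frac{p}{k}\right).
\]
Since $0\le j\le(p-1)/2$ forces $i,k<p$, all reciprocals $1/i,1/k$ are $p$-adic units, and expanding to first order immediately yields $X\equiv 1+p(H_j-H_{2j})\pmod{p^2}$.

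If $3j+1\not\equiv 0\pmod p$, then $p/(3j+1)\in p\Z_p$, so multiplying the mod-$p^2$ congruence for $X$ by $p/(3j+1)$ gives the claim mod $p^3$. The only obstruction is the singular case $p\mid(3j+1)$: since $1\le 3j+1\le(3p-1)/2$, this forces $3j+1=p$, i.e.\ $j=(p-1)/3$ with $p\equiv 1\pmod 3$ (hence $p>5$, so Lemma \ref{sunh} applies). There $p/(3j+1)=1$, and I need the sharper statement $X\equiv 1+p(H_j-H_{2j})\pmod{p^3}$.

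Extending the expansion to second order gives $X=1+p(H_j-H_{2j})+p^2C_j+O(p^3)$ with
\[
  C_j=\tfrac12\!\left[(H_j-H_{2j})^2-H_j^{(2)}-H_{2j}^{(2)}\right],
\]
so it suffices to show $C_j\equiv 0\pmod p$ when $j=(p-1)/3$. Two separate cancellations accomplish this: (a) from Lemma \ref{sunh}, $H_{(p-1)/3}\equiv H_{2(p-1)/3}\equiv-\tfrac32 q_p(3)\pmod p$, so $(H_j-H_{2j})^2\equiv 0\pmod{p^2}$; (b) the substitution $k\mapsto p-k$ in $H_{2j}^{(2)}$, using $p-2j=j+1$ and $(p-k)^{-2}\equiv k^{-2}\pmod p$, gives $H_{2j}^{(2)}\equiv H_{p-1}^{(2)}-H_j^{(2)}\pmod p$, whence Wolstenholme's theorem ($H_{p-1}^{(2)}\equiv 0\pmod p$ for $p>3$) forces $H_j^{(2)}+H_{2j}^{(2)}\equiv 0\pmod p$. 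Both bracketed terms vanish mod $p$, so $C_j\equiv 0\pmod p$.

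The main obstacle is exactly this singular case: the naive linear expansion only gives the congruence mod $p^2$, and lifting to mod $p^3$ requires the two unrelated cancellations above, both peculiar to primes $p\equiv 1\pmod 3$. Everything else is formal manipulation of binomial products.
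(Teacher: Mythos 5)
Your proof is correct and follows essentially the same route as the paper: factor out the middle term $p$ from $\prod_{i=-2j}^{j}(p+i)$, expand the remaining product linearly in $p$, and treat the singular index $j=(p-1)/3$ (the only $j$ in range with $p\mid 3j+1$) separately at second order. The only genuine difference is in that singular case: the paper evaluates $\binom{p-1}{(p-1)/3}\binom{p+(p-1)/3}{(p-1)/3}$ and the right-hand side separately, showing both are congruent to $1-\frac{p^2}{2}\big(\frac p3\big)B_{p-2}\big(\frac13\big)$ modulo $p^3$ via Lemma \ref{sunh}, whereas you keep the unified product $X$ and show the second-order coefficient $\frac12\big[(H_j-H_{2j})^2-H_j^{(2)}-H_{2j}^{(2)}\big]$ vanishes modulo $p$ using $H_{2j}\equiv H_j$ and the reflection $H_{p-1-k}^{(2)}\equiv -H_k^{(2)}\pmod p$ together with Wolstenholme. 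Your variant is marginally more self-contained since it never needs the Bernoulli polynomial value, only the reflection identities; both arguments are sound.
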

\begin{proof} If $0\leq j\leq (p-1)/2$ and $j\neq(p-1)/3$, then we have
\begin{align*}
\binom{3j}j\binom{p+j}{3j+1}&=\frac{(p+j)\cdots(p+1)p(p-1)\cdots(p-2j)}{j!(2j)!(3j+1)}\\
&\equiv\frac{pj!(1+pH_j)(-1)^{2j}(2j)!(1-pH_{2j})}{j!(2j)!(3j+1)}\\
&\equiv\frac{p}{3j+1}(1-pH_{2j}+pH_j)\;\pmod{p^3}.
\end{align*}
If $j=(p-1)/3$, then by Lemma \ref{sunh}, we have
\begin{align*}
&\binom{p-1}{\frac{p-1}3}\binom{p+\frac{p-1}3}{\frac{p-1}3}\\
&\equiv\left(1-pH_{\frac{p-1}3}+\frac{p^2}2(H_{\frac{p-1}3}^2-H_{\frac{p-1}3}^{(2)})\right)\left(1+pH_{\frac{p-1}3}+\frac{p^2}2(H_{\frac{p-1}3}^2-H_{\frac{p-1}3}^{(2)})\right)\\
&\equiv 1-p^2H_{\frac{p-1}3}^{(2)}\equiv1-\frac{p^2}2\left(\frac{p}3\right)B_{p-2}\Big(\frac13\Big)\;\pmod{p^3}
\end{align*}
and
$$
1-pH_{\frac{2p-2}3}+pH_{\frac{p-1}3}\equiv1-\frac{p^2}2\left(\frac{p}3\right)B_{p-2}\Big(\frac13\Big)\;\pmod{p^3}.
$$
This completes the proof of Lemma \ref{Lem2.2}.
\end{proof}
\begin{lemma}\label{mpt} Let $p>3$ be a prime. For any $p$-adic integer $t$, we have
\begin{equation*}
 \binom{\frac{2p-2}3+pt}{\frac{p-1}2}\equiv\binom{\frac{2p-2}3}{\frac{p-1}2}\left(1+pt(H_{\frac{2p-2}3}-H_{\frac{p-1}6})\right)\;\pmod{p^2}.
 \end{equation*}
 \end{lemma}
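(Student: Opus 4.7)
The plan is to expand the binomial coefficient factor-by-factor and keep only terms up to order $p^2$. Set $a=\frac{2p-2}{3}$ and $m=\frac{p-1}{2}$. For $\binom{a}{m}$ to make sense as an ordinary binomial coefficient we need $a\in\Z_{\ge 0}$, i.e.\ $p\equiv 1\pmod 3$; combined with $p$ odd this forces $p\equiv 1\pmod 6$, so that $a-m=\frac{p-1}{6}$ is also a non-negative integer. This is the implicit hypothesis under which the lemma is meant to apply.

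With this setup, I would write
\begin{equation*}
\binom{a+pt}{m}
=\prod_{j=0}^{m-1}\frac{a+pt-j}{m-j}
=\binom{a}{m}\prod_{j=0}^{m-1}\!\l(1+\frac{pt}{a-j}\r),
\end{equation*}
and verify that each $a-j$ is a $p$-adic unit for $0\le j\le m-1$. Indeed, $a\equiv -2/3\pmod p$, and $a-j\equiv 0\pmod p$ in this range would force $j\equiv\frac{2p-2}{3}\pmod p$; but a direct comparison shows $\frac{2p-2}{3}>\frac{p-3}{2}=m-1$, so no such $j$ exists. Hence every factor $\frac{1}{a-j}$ lies in $\Z_p$.

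Next I would expand the product modulo $p^2$. Since each term $\frac{pt}{a-j}$ has valuation $\ge 1$, all pairwise products vanish mod $p^2$, yielding
\begin{equation*}
\prod_{j=0}^{m-1}\!\l(1+\frac{pt}{a-j}\r)
\equiv 1+pt\sum_{j=0}^{m-1}\frac{1}{a-j}\pmod{p^2}.
\end{equation*}
Reindexing via $i=a-j$ transforms the inner sum into $\sum_{i=a-m+1}^{a}\frac{1}{i}=H_a-H_{a-m}$, and substituting $a=\frac{2p-2}{3}$, $a-m=\frac{p-1}{6}$ gives exactly $H_{\frac{2p-2}{3}}-H_{\frac{p-1}{6}}$. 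Multiplying by $\binom{a}{m}$ yields the claim.

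Honestly, there is no substantial obstacle here: the argument is a straightforward truncated Taylor expansion of a product. The only two points requiring a moment's care are (i) ruling out vanishing denominators in $\{a-j\colon 0\le j\le m-1\}$ modulo $p$, and (ii) confirming the clean telescoping identity $a-m=\frac{p-1}{6}$ so that the right-hand side comes out as a difference of genuine harmonic numbers. Both are immediate from the arithmetic of $p\equiv 1\pmod 6$.
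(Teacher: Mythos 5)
Your proof is correct and follows essentially the same route as the paper: factor $\binom{a+pt}{m}$ against $\binom{a}{m}$, expand the product to first order in $p$, and identify the resulting sum $\sum_j \frac{1}{a-j}$ with $H_{\frac{2p-2}{3}}-H_{\frac{p-1}{6}}$. The only difference is that you spell out the points the paper leaves implicit (the hypothesis $p\equiv 1\pmod 3$ and the fact that the denominators $a-j$ are $p$-adic units), which is a welcome but minor addition.
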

\begin{proof} Set $m=(2p-2)/3$. It is easy to check that
\begin{align*}
\binom{m+pt}{(p-1)/2}&=\frac{(m+pt)\cdots(m+pt-(p-1)/2+1)}{((p-1)/2)!}\\
&\equiv\frac{m\cdots(m-(p-1)/2+1)}{((p-1)/2)!}(1+pt(H_m-H_{m-(p-1)/2})\\
&=\binom{m}{(p-1)/2}(1+pt(H_m-H_{m-(p-1)/2})\;\pmod{p^2}.
\end{align*}
which completes the proof of Lemma \ref{mpt}.
\end{proof}
\begin{lemma}\label{3j+4p^3} Let $p>3$ be a prime. If $p=x^2+3y^2\equiv1\pmod3$, then
$$
p\sum_{k=0}^{\frac{p-1}2}\frac{\binom{2k}k^2}{(3k+4)16^k}\equiv\frac4{25}\left(4x^2-2p-\frac{p^2}{4x^2}\right)\;\pmod{p^3}.
$$
\end{lemma}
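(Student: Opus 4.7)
The plan is to substitute $j=k+1$ into Lemma~\ref{Lem2.2} to convert $\frac{p}{3k+4}$ into the product $\binom{3k+3}{k+1}\binom{p+k+1}{3k+4}(1+pH_{2k+2}-pH_{k+1})$ modulo $p^3$. This is valid uniformly for $0\le k\le (p-3)/2$; the delicate case $k=(p-4)/3$, where $3j+1=p$, is already absorbed into Lemma~\ref{Lem2.2}. The term $k=(p-1)/2$ falls outside the range and must be evaluated separately: there $p/(3k+4)=2p/(3p+5)$ expands directly modulo $p^3$ (producing a $25$ in the denominator), while $\binom{p-1}{(p-1)/2}^2/16^{(p-1)/2}$ is a central binomial whose $p^3$-congruence is classical. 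The rational factor $\frac{4}{25}$ in the claimed answer is expected to be traceable, at least in part, to this boundary contribution.

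After the substitution, the bulk of the sum becomes a polynomial combination of binomial coefficients weighted by $1+pH_{2k+2}-pH_{k+1}$, which I would collapse via a closed-form hypergeometric identity found and verified by the \texttt{Sigma} package, in line with the authors' approach announced at the end of Section~1. Such an identity should reduce the sum to contributions at indices near $\lfloor p/6\rfloor$, $\lfloor p/3\rfloor$ and $(p-1)/2$, where $\binom{p+k+1}{3k+4}$ becomes of order $p$; the perturbations $\binom{(2p-2)/3+pt}{(p-1)/2}$ that surface there are precisely what Lemma~\ref{mpt} is designed to handle, and the partial harmonic sums $H_{\lfloor p/6\rfloor}$, $H_{\lfloor p/3\rfloor}$, $H_{\lfloor 2p/3\rfloor}$ left after reduction are controlled by Lemma~\ref{sunh}. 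To finish, one rewrites the result in terms of the central binomial $\binom{(p-1)/2}{\lfloor p/6\rfloor}^2$ and invokes a Jacobsthal/Gauss-type supercongruence identifying this binomial with $4x^2-2p-p^2/(4x^2)$ modulo $p^3$ for $p=x^2+3y^2\equiv 1\pmod 3$ (with $x$ suitably normalized). This is the standard mechanism by which the binary quadratic form enters supercongruences of this type.

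The main obstacle is the $p^3$-level bookkeeping. Because the left-hand side already carries an overall factor of $p$, every $p^2$ contribution inside the summand is relevant: one must invert $(1-pH_{2k+2}+pH_{k+1})$ to second order (introducing a $p^2(H_{2k+2}-H_{k+1})^2$ correction), retain the quadratic $p^2$-terms in Lemma~\ref{sunh}, and verify that the competing $p^2$ contributions from the Sigma-generated identity, the boundary term at $k=(p-1)/2$, and the Jacobsthal supercongruence conspire to produce exactly $\frac{4}{25}\bigl(4x^2-2p-p^2/(4x^2)\bigr)$. Verifying this clean cancellation, rather than any single technical step, is the delicate part of the proof.
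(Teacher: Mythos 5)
There is a genuine gap: your outline never exhibits the one identity on which everything hinges, and the route you choose makes that identity unlikely to exist in usable form. After substituting $j=k+1$ into Lemma~\ref{Lem2.2} you are left with
$\sum_{k}\binom{2k}{k}^2\binom{3k+3}{k+1}\binom{p+k+1}{3k+4}16^{-k}$
(times harmonic\mbox{-}number weights), i.e.\ a sum in which $p$ sits \emph{inside} a binomial coefficient. This is not a standard summable hypergeometric series (the presence of $\binom{2k-2}{k-1}^2\binom{3k}{k}\binom{p+k}{3k+1}$ after reindexing does not match the Chan--Zudilin/Sun transformations \eqref{2.0}--\eqref{2.1}, nor any Pfaff--Saalsch\"utz-type evaluation), and your claim that it ``localizes'' near $\lfloor p/6\rfloor$, $\lfloor p/3\rfloor$ and $(p-1)/2$ is false: $\binom{p+k+1}{3k+4}$ is divisible by $p$ for \emph{every} $k$ with $0\le k\le(p-3)/2$, since its numerator $(p+k+1)(p+k)\cdots(p-2k-2)$ always contains the factor $p$ exactly once. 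So there is no mechanism in your plan that actually reduces the sum to a few evaluable terms, and Lemmas~\ref{mpt} and \ref{sunh} have nothing concrete to act on. Your attribution of the factor $\frac{4}{25}$ to the boundary term $k=(p-1)/2$ is also off: that term contributes only at order $p$ (by Morley's congruence it is $\frac{2p}{3p+5}4^{p-1}$), whereas the leading term $\frac{16x^2}{25}$ must come from the bulk.

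The paper's proof goes in the opposite direction. It replaces the summand, not the denominator: since $\binom{2k}{k}^2/16^k=\binom{-1/2}{k}^2\equiv(-1)^k\binom{(p-1)/2}{k}\binom{(p-1)/2+k}{k}\pmod{p^2}$, the sum becomes (up to an explicit correction at the single index $k=(p-4)/3$, where $3k+4=p$ eats the prefactor $p$ and forces mod-$p^3$ accuracy) the Pfaff--Saalsch\"utz-summable sum $\sum_{k}(-1)^k\binom{n}{k}\binom{n+k}{k}/(3k+4)$ at $n=(p-1)/2$, whose closed form $-\prod_{k=1}^n\frac{3k-1}{3k-2}\big/\big((3n-1)(3n+1)(3n+4)\big)$ produces the prefactor $\frac{4}{25-9p^2}$ --- the true origin of $\frac{4}{25}$ --- times a Pochhammer ratio that the results of \cite{mao} identify with $4x^2-2p-\frac{p^2}{4x^2}$ modulo $p^3$. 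If you want to salvage your plan, you would need to either find and prove the closed form for your transformed sum (which I do not believe exists in elementary terms) or switch to the summand substitution above.
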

\begin{proof} By using \texttt{Sigma}, we establish the following identity:
$$
\sum_{k=0}^n\frac{\binom nk\binom{n+k}k(-1)^k}{3k+4}=-\frac{1}{(3n-1)(3n+1)(3n+4)}\prod_{k=1}^n\frac{3k-1}{3k-2}.
$$
(In terms of classical identities for hypergeometric series,
this evaluation is equivalent to the
$(a,b,c)\mapsto (n+1,4/3,1)$ case of the
Pfaff--Saalsch\"utz summation \cite[Appendix III, Equation (III.2)]{slater}.)
So modulo $p^3$, we have
\begin{align*}
&p\sum_{k=0}^{\frac{p-1}2}\frac{\binom{2k}k^2}{(3k+4)16^k}\\
&\equiv p\sum_{k=0}^{\frac{p-1}2}\frac{\binom{\frac{p-1}2}k\binom{\frac{p-1}2+k}{k}(-1)^k}{3k+4}+\binom{-\frac12}{\frac{p-4}3}^2+\binom{\frac{p-1}2}{\frac{p-4}3}\binom{\frac{p-1}2+\frac{p-4}3}{\frac{p-4}3}\\
&=\frac{4}{25-9p^2}\frac{(\frac23)_{\frac{p-1}2}}{(\frac13)_{\frac{p-1}3}(\frac{p}3+1)_{\frac{p-1}6}}+\binom{-\frac12}{\frac{p-4}3}^2+\binom{\frac{p-1}2}{\frac{p-4}3}\binom{\frac{p-1}2+\frac{p-4}3}{\frac{p-4}3},
\end{align*}
where we used the standard notation for the shifted factorial
$(a)_n=\prod_{j=0}^{n-1}(a+j)$ (cf.\ \cite[Section 1.1.1]{slater}).
It is easy to check that
\begin{align}
\binom{-\frac12}{\frac{p-4}3}^2&=\frac{4(p-1)^2}{(2p-5)^2}\binom{-\frac12}{\frac{p-1}3}^2,\label{-12p-43}\\
  \binom{\frac{p-1}2}{\frac{p-4}3}\binom{\frac{p-1}2+\frac{p-4}3}{\frac{p-4}3}
  &=\frac{4(p-1)}{5(p+5)}\binom{\frac{p-1}2}{\frac{p-1}3}\binom{\frac{p-1}2+\frac{p-1}3}{\frac{p-1}3}.\notag
\end{align}
These identities, together with \cite[pp.~14]{mao}, yield
\begin{align*}
  &p\sum_{k=0}^{\frac{p-1}2}\frac{\binom{2k}k^2}{(3k+4)16^k}\\
  &\equiv\frac{4}{25-9p^2}\frac{(\frac23)_{\frac{p-1}2}}
    {(\frac13)_{\frac{p-1}3}(\frac{p}3+1)_{\frac{p-1}6}}+\frac{4(p-1)^2}{(2p-5)^2}
    \binom{-\frac12}{\frac{p-1}3}^2\\
&\quad\;+\frac{4(p-1)}{5(p+5)}\binom{\frac{p-1}2}{\frac{p-1}3}\binom{\frac{p-1}2+\frac{p-1}3}{\frac{p-1}3}\\
  &\equiv\frac4{25}\left(1+\frac{9p^2}{25}\right)(-1)^{\frac{p-1}6}\binom{\frac{p-1}2}{\frac{p-1}3}\binom{\frac{2p-2}3}{\frac{p-1}2}\\
  &\quad\;\times
    \left(1-\frac{2p}3q_p(2)+\frac{5p^2}9q^2_p(2)
    +\frac{5p^2}{12}\left(\frac{p}3\right)B_{p-2}\Big(\frac13\Big)\right)\\
  &\quad\;+\frac4{25}\left(1-\frac{6p}5-\frac{3p^2}{25}\right)
    \binom{\frac{p-1}2}{\frac{p-1}3}^2\\
  &\qquad\;\;\times\left(1-\frac{3p}2q_p(3)+\frac{15p^2}8q^2_p(3)+\frac{5p^2}{24}\left(\frac{p}3\right)B_{p-2}\Big(\frac13\Big)\right)\\
&\quad\;-\frac4{25}\left(1-\frac{6p}5-\frac{6p^2}{25}\right)\binom{\frac{p-1}2}{\frac{p-1}3}\binom{\frac{5p-5}6}{\frac{p-1}3}\quad\pmod{p^3}.
\end{align*}
Again, by \cite[pp.~14--15]{mao}, we have
\begin{align*}
  &p\sum_{k=0}^{\frac{p-1}2}\frac{\binom{2k}k^2}{(3k+4)16^k}
  \\&\equiv\frac4{25}\left(1+\frac{9p^2}{25}\right)\left(4x^2-2p-\frac{p^2}{4x^2}\right)\\
  &\quad\;\times\left(1+\frac{2p}3q_p(2)-\frac{p^2}9q^2_p(2)
    +\frac{5p^2}{24}\left(\frac{p}3\right)B_{p-2}\Big(\frac13\Big)\right)\\
  &\quad\;\times\left(1-\frac{2p}3q_p(2)+\frac{5p^2}9q^2_p(2)
    +\frac{5p^2}{12}\left(\frac{p}3\right)B_{p-2}\Big(\frac13\Big)\right)\\
  &\quad\;+\frac4{25}\left(1-\frac{6p}5-\frac{3p^2}{25}\right)\left(4x^2-2p-\frac{p^2}{4x^2}\right)\\
  &\qquad\;\;\times\bigg(1-\frac{3p}2q_p(3)
    +\frac{15p^2}8q^2_p(3)+\frac{5p^2}{24}\left(\frac{p}3\right)B_{p-2}\Big(\frac13\Big)\bigg)\\
  &\qquad\;\;\times\bigg(1-\frac{4p}3q_p(2)+\frac{3p}2q_p(3)
    +\frac{14p^2}9q^2_p(2)-2p^2q_p(2)q_p(3)\\
  &\qquad\qquad+\frac{3p^2}8q^2_p(3)
    +\frac{p^2}{8}\left(\frac{p}3\right)B_{p-2}
    \Big(\frac13\Big)\bigg)\\
  &\quad\;-\frac4{25}\left(1-\frac{6p}5-\frac{6p^2}{25}\right)\left(4x^2-2p-\frac{p^2}{4x^2}\right)\\
  &\qquad\;\;\times
    \left(1-\frac{4p}3q_p(2)+\frac{14p^2}9q^2_p(2)
    +\frac{23p^2}{24}\left(\frac{p}3\right)B_{p-2}
    \Big(\frac13\Big)\right)\;\pmod{p^3}.
\end{align*}
It is easy to check that the right-side of the above congruence is congruent
to $\frac{4}{25}\left(4x^2-2p-\frac{p^2}{4x^2}\right)$ modulo $p^3$.
Therefore we immediately get the desired result stated in Lemma \ref{3j+4p^3}.
\end{proof}
\begin{lemma}\label{fuzhu} Let $p>3$ be a prime with $p=x^2+3y^2\equiv1\pmod3$ and let $k=(p-4)/3$. Then
\begin{align*}
  &\big(k(k+1)(k+3)+(2-k^2)(3k+1)p-(k+2)(3k+1)(3k+2)p^2\big)\\
  &\times\frac{\binom{-\frac12}{k}^2}{(k+1)(3k+2)}
    \left(\frac{\binom{3k}{k}\binom{p+k}{3k+1}}{3k+4}
    -\frac{1-pH_{2k}+pH_k}{3k+1}\right)\\
  &\equiv-\frac{184p^2x^2}{125}\;\pmod{p^3}
\end{align*}
and
\begin{align*}
  &\big(k(1+2k)+2p(k+1)(3k+1)-2p^2(3k+1)(3k+2)\big)\\
  &\times\frac{\binom{-\frac12}{k}^2}{3k+2}
    \left(\frac{\binom{3k}{k}\binom{p+2k}{3k+1}}{3k+4}
    +\frac{1+pH_{2k}-pH_k}{3k+1}\right)\\
  &\equiv-\frac{184p^2x^2}{125}\;\pmod{p^3}.
\end{align*}
\end{lemma}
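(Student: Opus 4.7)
The essential obstacle is that when $k=(p-4)/3$ the value $3k+4$ equals $p$ itself, so dividing by $3k+4$ in $\binom{3k}k\binom{p+k}{3k+1}/(3k+4)$ introduces a factor of $1/p$, and Lemma~\ref{Lem2.2} (which evaluates the binomial product only modulo $p^3$) falls one order short of what is needed to reach mod $p^3$ after division by $p$. The plan is to refine the Taylor-style expansion of the ascending and descending factorial products one order further, which will isolate a $p^2$ correction that, after multiplication by the outer coefficient, produces the explicit target $-184p^2x^2/125$.

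Concretely, I would write
\begin{align*}
\binom{3k}k\binom{p+k}{3k+1}=\f{(p+k)\cdots(p+1)\cdot p\cdot(p-1)\cdots(p-2k)}{k!(2k)!(3k+1)}
\end{align*}
and expand $\prod_{j=1}^k(1+p/j)$ and $\prod_{j=1}^{2k}(1-p/j)$ to second order in $p$, obtaining
\begin{align*}
\binom{3k}k\binom{p+k}{3k+1}\equiv\f{p}{3k+1}\l(1+p(H_k-H_{2k})+p^2 C\r)\pmod{p^4},
\end{align*}
where $C:=\f12(H_k-H_{2k})^2-\f12(H_k^{(2)}+H_{2k}^{(2)})$. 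Dividing by $3k+4=p$ and subtracting $(1-pH_{2k}+pH_k)/(3k+1)$ kills the order-$1$ and order-$p$ contributions and leaves the clean remainder $p^2C/(3k+1)\pmod{p^3}$. The analogous expansion of $\binom{p+2k}{3k+1}$ carries an extra sign $(-1)^k$ because its descending subproduct $(p-1)\cdots(p-k)$ has $k$ negated factors; since $p\equiv1\pmod 6$ forces $k=(p-4)/3$ to be odd, $(-1)^k=-1$, which explains the $+$ in front of $(1+pH_{2k}-pH_k)/(3k+1)$ in the second identity and produces $-p^2 C/(3k+1)\pmod{p^3}$ with the same $C$. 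Because both resulting expressions carry the factor $p^2$, only the outer coefficients modulo $p$ matter; reducing $k\equiv -4/3$, $3k+1\equiv -3$, $3k+2\equiv -2\pmod p$ shows that both parts of Lemma~\ref{fuzhu} collapse to the common expression $-\f{10\,p^2\,C\,\binom{-1/2}k^2}{27}\pmod{p^3}$.

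It remains to compute $C$ and $\binom{-1/2}k^2$ modulo $p$. Using $k=(p-1)/3-1$, I would rewrite $H_k,H_{2k},H_k^{(2)},H_{2k}^{(2)}$ in terms of $H_{(p-1)/3},H_{(2p-2)/3}$ and their quadratic analogues; Lemma~\ref{sunh} forces $H_{(p-1)/3}\equiv H_{(2p-2)/3}\pmod p$, and the reflection $j\mapsto p-j$ combined with Wolstenholme's congruence $H_{p-1}^{(2)}\equiv0\pmod p$ gives $H_{(2p-2)/3}^{(2)}\equiv -H_{(p-1)/3}^{(2)}\pmod p$. The unknown harmonic values cancel, leaving the rational residues $H_k-H_{2k}\equiv 9/10$ and $H_k^{(2)}+H_{2k}^{(2)}\equiv -1161/100\pmod p$, hence $C\equiv 621/100\pmod p$. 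For the binomial, identity~\eqref{-12p-43} together with $\binom{-1/2}{(p-1)/3}\equiv\binom{(p-1)/2}{(p-1)/3}\pmod p$ and the mod $p$ value $\binom{(p-1)/2}{(p-1)/3}^2\equiv4x^2\pmod p$ from \cite[pp.~14]{mao} yields $\binom{-1/2}k^2\equiv 16 x^2/25\pmod p$. Multiplying produces $-\f{10}{27}\cdot\f{621}{100}\cdot\f{16 x^2}{25}=-\f{184x^2}{125}$, establishing both congruences. The main technical hurdle is the refined mod-$p^4$ expansion in the first step: one must carefully track every $p^2$ contribution from both factorial products, including the cross-term $-p^2H_kH_{2k}$ arising from multiplying the two first-order parts.
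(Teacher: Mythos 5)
Your proposal is correct, and it reaches the target by a genuinely different route for the key step. The paper never performs a second-order expansion at the index $k=(p-4)/3$: instead it rewrites the bracket $\frac{\binom{3k}{k}\binom{p+k}{3k+1}}{3k+4}-\frac{1-pH_{2k}+pH_k}{3k+1}$ exactly in terms of the shifted product $\binom{3k+3}{k+1}\binom{p+k+1}{3k+4}$ times an explicit rational prefactor (and rewrites $H_{2k},H_k$ in terms of $H_{2k+2},H_{k+1}$), so that Lemma \ref{Lem2.2} applies at $j=k+1=(p-1)/3$, where the product is known to order $p^3$ without any division by $p$; the remaining work is explicit rational arithmetic in $p$, yielding the intermediate constant $-\frac{207p^2}{100}$. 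You instead push the factorial expansion one order beyond Lemma \ref{Lem2.2} (to modulo $p^4$ before dividing by $3k+4=p$), which forces you to evaluate the second-order constant $C=\frac12(H_k-H_{2k})^2-\frac12\big(H_k^{(2)}+H_{2k}^{(2)}\big)$ modulo $p$; your reduction via Lemma \ref{sunh}, the reflection $H^{(2)}_{p-1-m}\equiv-H^{(2)}_m$ (the same fact the paper uses in Lemma \ref{p2j}), and Wolstenholme gives $C\equiv\frac{621}{100}$, hence a bracket $\equiv\frac{p^2C}{3k+1}\equiv-\frac{207p^2}{100}\pmod{p^3}$, in exact agreement with the paper's intermediate value, and the final assembly (outer factors modulo $p$, $\binom{-\frac12}{k}^2\equiv\frac{16x^2}{25}$ via \eqref{-12p-43} and $\binom{(p-1)/2}{(p-1)/3}\equiv 2x$) coincides with the paper's. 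What your approach buys is uniformity: the same expansion, together with the parity observation $(-1)^k=-1$ for $p\equiv1\pmod 6$, handles both congruences simultaneously and makes transparent why the leading and first-order terms cancel, whereas the paper's shift trick is slicker (no $H^{(2)}$ values needed at this point, only Lemma \ref{Lem2.2}'s special case $j=(p-1)/3$ plus rational arithmetic) but is carried out only for the first congruence, the second being declared similar. The only points to spell out in a write-up are the $p$-integrality justifying the truncation modulo $p^4$ (clear, since all harmonic denominators are less than $p$) and the sign convention in $\binom{(p-1)/2}{(p-1)/3}\equiv 2x\pmod p$, which is immaterial after squaring.
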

\begin{proof} We only prove the first congruence; the proof
  of the second congruence is similar. It is easy to see that
\begin{align*}
&\frac{\binom{3k}{k}\binom{p+k}{3k+1}}{3k+4}-\frac{1-pH_{2k}+pH_k}{3k+1}\\
&=\binom{3k+3}{k+1}\binom{p+k+1}{3k+4}\frac{2(2p-5)(p-1)^2}{(4p-1)(p-3)(p+2)(p+5)}\\
&\quad\;-\frac{1-pH_{2k+2}+pH_{k+1}+\frac{p}{2k+2}+\frac{p}{2k+1}-\frac{p}{k+1}}{3k+1}.
\end{align*}
By Lemma \ref{Lem2.2} we have
$$
\binom{3k+3}{k+1}\binom{p+k+1}{3k+4}\equiv1-pH_{2k+2}+pH_{k+1}\pmod{p^3}.
$$
Thus,
$$
\frac{\binom{3k}{k}\binom{p+k}{3k+1}}{3k+4}-\frac{1-pH_{2k}+pH_k}{3k+1}\equiv-\frac{207p^2}{100}\;\pmod{p^3}.
$$
Together with \eqref{-12p-43} and
$\binom{(p-1)/2}{(p-1)/3}\equiv2x\pmod p$ (cf.\ \cite{YMK}), this yields
\begin{align*}
  &\big(k(k+1)(k+3)+(2-k^2)(3k+1)p-(k+2)(3k+1)(3k+2)p^2\big)\\
  &\times\frac{\binom{-\frac12}{k}^2}{(k+1)(3k+2)}
    \left(\frac{\binom{3k}{k}\binom{p+k}{3k+1}}{3k+4}
    -\frac{1-pH_{2k}+pH_k}{3k+1}\right)\\
  &\equiv-\frac{184p^2x^2}{125}\quad\pmod{p^3},
\end{align*}
which completes the proof of Lemma \ref{fuzhu}.
\end{proof}
\begin{lemma}\label{p2j} Let $p>2$ be a prime. If $0\leq j\leq (p-1)/2$, then we have
$$
\binom{3j}j\binom{p+2j}{3j+1}\equiv \frac{p(-1)^j}{3j+1}(1+pH_{2j}-pH_j)\;\pmod{p^3}.
$$
and
$$
1-pH_{\frac{2p-2}3}+pH_{\frac{p-1}3}\equiv1-\frac{p^2}2\left(\frac{p}3\right)B_{p-2}\Big(\frac13\Big)\;\pmod{p^3}.
$$
If $(p+1)/2\leq j\leq p-1$, then
$$
\binom{3j}j\binom{p+2j}{3j+1}\equiv \frac{2p(-1)^j}{3j+1}\;\pmod{p^2}.
$$
\end{lemma}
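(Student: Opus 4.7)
The plan is to derive all three assertions from the single identity
$$
\binom{3j}{j}\binom{p+2j}{3j+1}
=\frac{(p+2j)(p+2j-1)\cdots(p+1)\,p\,(p-1)\cdots(p-j)}{j!\,(2j)!\,(3j+1)},
$$
by first pulling out whatever powers of $p$ appear in numerator and denominator, and then Taylor-expanding the remaining factors via $\prod_{k=1}^{m}(p+k)\equiv m!\bigl(1+pH_m+\tfrac{p^2}2(H_m^2-H_m^{(2)})\bigr)\pmod{p^3}$ and its $(p-k)$-analogue. The scheme essentially parallels the proof of Lemma \ref{Lem2.2}.

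For the first claim, when $0\le j\le(p-1)/2$ and $j\ne(p-1)/3$, the factor $3j+1$ is a $p$-adic unit and only the central factor of $p$ appears, so the expansion yields the stated congruence directly modulo $p^3$. In the exceptional case $j=(p-1)/3$ (which requires $p\equiv 1\pmod 3$), the factor $3j+1=p$ absorbs the central $p$, so one must retain the full quadratic correction
$$
(-1)^j\Bigl[\,1+p(H_{2j}-H_j)+\tfrac{p^2}2\bigl((H_{2j}-H_j)^2-H_{2j}^{(2)}-H_j^{(2)}\bigr)\Bigr].
$$
I would then show that the bracketed $p^2$-term is $\equiv0\pmod p$ as follows: Lemma \ref{sunh} yields $H_{(2p-2)/3}-H_{(p-1)/3}\equiv\tfrac{p}{2}\bigl(\tfrac{p}{3}\bigr)B_{p-2}(1/3)\pmod{p^2}$, so its square is $O(p^2)$; and $H_{(2p-2)/3}^{(2)}+H_{(p-1)/3}^{(2)}\equiv 0\pmod p$ follows from Wolstenholme's congruence $H_{p-1}^{(2)}\equiv 0\pmod p$ together with the substitution $k\mapsto p-k$ in the tail $\sum_{k=(2p+1)/3}^{p-1}k^{-2}$.

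The second claim is an immediate consequence of the fourth and fifth congruences of Lemma \ref{sunh}: subtracting them causes the $q_p(3)$ and $q_p^2(3)$ contributions to cancel, leaving the $B_{p-2}(1/3)$-term with the required coefficient $\tfrac{p}{2}\bigl(\tfrac{p}{3}\bigr)$. For the third claim, the range $(p+1)/2\le j\le p-1$ introduces a second factor of $p$ in the numerator (from $2p$) and a factor of $p$ in $(2j)!$ in the denominator; after pulling these out, the surviving products are $p$-adic units whose leading behaviour combines with the denominator to give $\frac{2p(-1)^j}{3j+1}$ modulo $p^2$. At the exceptional value $j=(2p-1)/3$ (only when $p\equiv 2\pmod 3$) the factor $3j+1=2p$ absorbs yet another power of $p$; the remaining $p$-correction is proportional to $H_{\lfloor p/3\rfloor}-H_{\lfloor 2p/3\rfloor}$, which is $O(p)$ by Lemma \ref{sunh}, so the stated value $(-1)^j\pmod{p^2}$ emerges.

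The main obstacle is the treatment of the special values $j=(p-1)/3$ and $j=(2p-1)/3$: in both, the naive expansion develops a $0/0$ indeterminacy requiring the $p^2$-refinement of Lemma \ref{sunh}, together with the parity relation $H_{(2p-2)/3}^{(2)}+H_{(p-1)/3}^{(2)}\equiv 0\pmod p$ coming from Wolstenholme's theorem. Elsewhere the argument is a mechanical power-series expansion in $p$.
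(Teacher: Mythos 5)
Your proposal is correct and matches the paper's argument essentially step for step: expand the product of $3j+1$ consecutive integers around multiples of $p$, use the $1+pH_m+\tfrac{p^2}2(H_m^2-H_m^{(2)})$ expansions, handle $j=(p-1)/3$ via Lemma \ref{sunh} together with $H_{p-1-k}^{(2)}\equiv-H_k^{(2)}\pmod p$, and obtain the second claim by subtracting the last two congruences of Lemma \ref{sunh}. If anything, your treatment of the exceptional index $j=(2p-1)/3$ (when $p\equiv2\pmod3$) in the third claim, where $3j+1=2p$ absorbs a factor of $p$ and the residual correction $H_{\lfloor p/3\rfloor}-H_{\lfloor 2p/3\rfloor}\equiv0\pmod p$ must be invoked, is more careful than the paper's, which passes over that case silently.
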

\begin{proof}If $0\leq j\leq (p-1)/2$ and $j\neq(p-1)/3$, then we have
\begin{align*}
\binom{3j}j\binom{p+2j}{3j+1}&=\frac{(p+2j)\cdots(p+1)p(p-1)\cdots(p-j)}{(3j+1)j!(2j)!}\\
&\equiv\frac{p(2j)!(1+pH_{2j})(-1)^{j}(j)!(1-pH_j)}{(3j+1)j!(2j)!}\\
&=\frac{p(-1)^j}{3j+1}(1+pH_{2j}-pH_{j})\quad\pmod{p^3}.
\end{align*}
If $j=(p-1)/3$, then by Lemma \ref{sunh} and $H_{p-1-k}^{(2)}\equiv-H_k^{(2)}\pmod p$, we have
\begin{align*}
&\binom{p-1}{\frac{p-1}3}\binom{p+\frac{2p-2}3}{\frac{2p-2}3}\\
&\equiv\left(1-pH_{\frac{p-1}3}+\frac{p^2}2(H_{\frac{p-1}3}^2-H_{\frac{p-1}3}^{(2)})\right)\left(1+pH_{\frac{2p-2}3}+\frac{p^2}2(H_{\frac{2p-2}3}^2-H_{\frac{2p-2}3}^{(2)})\right)\\
&\equiv 1+p(H_{\frac{2p-2}3}-H_{\frac{p-1}3})=\frac{p(-1)^j}{3j+1}(1+pH_{2j}-pH_{j})\qquad\pmod{p^3}.
\end{align*}
If $(p+1)/2\leq j\leq p-1$, then
\begin{align*}
&\binom{3j}j\binom{p+2j}{3j+1}\\
&=\frac{(p+2j)\cdots(2p+1)(2p)(2p-1)\cdots(p+1)p(p-1)\cdots(p-j)}{(3j+1)j!(2j)!}\\
&\equiv\frac{2p^2(2j)\cdots(p+1)(p-1)!(-1)^{j}(j)!}{(3j+1)j!(2j)!}\equiv\frac{2p(-1)^j}{3j+1}\;\pmod{p^2},
\end{align*}
which completes the proof of Lemma \ref{p2j}.
\end{proof}
\begin{lemma}\label{h2jhj3j+4} Let $p>3$ be a prime with $p=x^2+3y^2\equiv1\pmod3$. Then
\begin{align*}
p\sum_{j=0}^{\frac{p-1}2}\frac{\binom{2j}j^2(H_{2j}-H_j)}{(3j+4)16^j}\equiv-\frac{18}{125}(4x^2-2p)\;\pmod{p^2}.
\end{align*}
\end{lemma}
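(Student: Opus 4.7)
The plan is to follow the template of Lemma \ref{3j+4p^3}, adapted to accommodate the harmonic factor $H_{2j}-H_j$. From the Pochhammer expansion
$$(-1)^j\binom{\frac{p-1}{2}}{j}\binom{\frac{p-1}{2}+j}{j}=\frac{\binom{2j}{j}^2}{16^j}\prod_{\substack{m\text{ odd}\\1\le m\le 2j-1}}\left(1-\frac{p^2}{m^2}\right),$$
we obtain $(-1)^j\binom{(p-1)/2}{j}\binom{(p-1)/2+j}{j}\equiv\binom{2j}{j}^2/16^j\pmod{p^2}$. Since this difference is $O(p^2)$, $H_{2j}-H_j$ is a $p$-adic integer for $0\le j\le(p-1)/2$, and the only potentially problematic index $j=(p-4)/3$ (where $p\mid 3j+4$) still contributes an $O(p^2)$ error after the outer factor of $p$, it follows that
$$p\sum_{j=0}^{\frac{p-1}{2}}\frac{\binom{2j}{j}^2(H_{2j}-H_j)}{(3j+4)16^j}\equiv p\sum_{j=0}^{\frac{p-1}{2}}\frac{(-1)^j\binom{\frac{p-1}{2}}{j}\binom{\frac{p-1}{2}+j}{j}(H_{2j}-H_j)}{3j+4}\pmod{p^2}.$$

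Next, I would evaluate the right-hand hypergeometric sum in closed form. Since
$$\frac{d}{d\epsilon}\left.\frac{\binom{2k+\epsilon}{k}}{\binom{2k}{k}}\right|_{\epsilon=0}=H_{2k}-H_k,$$
the target sum arises as the $\epsilon$-derivative at $\epsilon=0$ of a parametric extension of the sum evaluated by the Pfaff--Saalsch\"utz-type identity used in Lemma \ref{3j+4p^3}. Invoking \texttt{Sigma} as was done there, I expect to obtain a closed form of the type $\phi(n)R(n)+\psi(n)$, where $R(n)$ is the closed form from Lemma \ref{3j+4p^3} and $\phi$, $\psi$ involve simple ratios of Pochhammer symbols together with elementary harmonic sums of shifted arithmetic progressions.

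Setting $n=(p-1)/2$ and simplifying using the Pochhammer evaluations from \cite[pp.~14--15]{mao}, Lemma \ref{sunh}, and the congruence $\binom{(p-1)/2}{(p-1)/3}\equiv 2x\pmod p$ from \cite{YMK}, reduces everything modulo $p^2$. After the various contributions involving $q_p(2)$, $q_p(3)$, and $B_{p-2}(1/3)$ cancel, as they did in the proofs of Lemmas \ref{3j+4p^3} and \ref{fuzhu}, the result collapses to $-\frac{18}{125}(4x^2-2p)\pmod{p^2}$.

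The main obstacle is deriving and simplifying the closed form for the harmonic-weighted hypergeometric sum; once that is available, the remaining work is lengthy but routine, paralleling the extensive algebraic manipulations already carried out in the earlier lemmas.
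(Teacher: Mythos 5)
Your outline coincides with the paper's: replace $\binom{2j}{j}^2/16^j$ by $(-1)^j\binom{(p-1)/2}{j}\binom{(p-1)/2+j}{j}$ modulo $p^2$ (your product identity is correct, and your remark that the index $j=(p-4)/3$, where $3j+4=p$, only contributes an $O(p^2)$ error is exactly what makes this legitimate at this modulus), and then evaluate $\sum_{j=0}^{n}\binom{n}{j}\binom{n+j}{j}(-1)^j(H_{2j}-H_j)/(3j+4)$ at $n=(p-1)/2$ via a \texttt{Sigma}-type identity. The trouble is that everything past this point --- which you yourself call ``the main obstacle'' --- is left as an expectation. The identity the paper actually uses is
$$
\sum_{j=0}^n\frac{\binom{n}j\binom{n+j}j(-1)^j(H_{2j}-H_j)}{3j+4}=-\frac{9(2n+1)}{10(3n-1)(3n+4)}+\frac{(\tfrac23)_n}{(3n-1)(3n+1)(3n+4)(\tfrac13)_n}\Big(\frac9{10}+\sum_{k=1}^n\frac{(\tfrac13)_k}{k(\tfrac23)_k}\Big),
$$
and it is \emph{not} a closed form: it carries the residual sum $\sum_{k=1}^n(\tfrac13)_k/\big(k(\tfrac23)_k\big)$. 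This already undercuts your proposed derivation by differentiating a parametric extension of the Pfaff--Saalsch\"utz evaluation of Lemma \ref{3j+4p^3}: deforming one parameter destroys the balancing condition, and differentiating a parametric closed form would in any case be expected to produce plain reciprocal sums $\sum_i 1/(a+i)$ rather than a sum whose terms are ratios of Pochhammer symbols; in the paper the identity comes directly from \texttt{Sigma}'s telescoping machinery, not from such a derivative, and your proposal neither derives it nor pins down its exact shape.

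Second, even granting the anticipated structure $\phi(n)R(n)+\psi(n)$, the conclusion is not ``routine cancellation.'' At $n=(p-1)/2$ one needs two specific nontrivial inputs: $\frac{(\frac23)_{(p-1)/2}}{(\frac13)_{(p-1)/3}(\frac p3+1)_{(p-1)/6}}\equiv 4x^2-2p\pmod{p^2}$ and, crucially, $\frac{(\frac23)_{(p-1)/2}}{(\frac13)_{(p-1)/2}}\sum_{k=1}^{(p-1)/2}\frac{(\frac13)_k}{k(\frac23)_k}\equiv 0\pmod p$ (both imported from \cite{mao,mp3}); the second is what annihilates the residual sum and leaves exactly the prefactor $\frac9{10}$ times the Lemma \ref{3j+4p^3}-type quantity, yielding $-\frac{18}{125}(4x^2-2p)$. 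Your sketch never identifies this ingredient, and the cancellation you invoke among terms with $q_p(2)$, $q_p(3)$ and $B_{p-2}(\tfrac13)$ is not what occurs here: at the modulus $p^2$ none of these quantities enters the computation at all. So the decisive steps --- the explicit harmonic-weighted identity and the disposal of its residual sum --- are missing, and as written the proposal is a plausible plan rather than a proof.
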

\begin{proof} By using \texttt{Sigma}, we establish the following identity:
\begin{align*}
&\sum_{j=0}^n\frac{\binom{n}j\binom{n+j}j(-1)^j(H_{2j}-H_j)}{3j+4}=-\frac{9(2n+1)}{10(3n-1)(3n+4)}\\
&+\frac{(\frac23)_n}{(3n-1)(3n+1)(3n+4)(\frac13)_n}\left(\frac9{10}+\sum_{k=1}^n\frac{(\frac13)_k}{k(\frac23)_k}\right).
\end{align*}
Substituting $n=(p-1)/2$ into the above identity, then modulo $p^2$ we have
\begin{align*}
p\sum_{j=0}^{\frac{p-1}2}\frac{\binom{2j}j^2(H_{2j}-H_j)}{(3j+4)16^j}\equiv \frac{p(\frac23)_{\frac{p-1}2}}{\frac{3p-5}2\frac{3p-1}2\frac{3p+5}2(\frac13)_{\frac{p-1}2}}\left(\frac9{10}+\sum_{k=1}^{\frac{p-1}2}\frac{(\frac13)_k}{k(\frac23)_k}\right).
\end{align*}
In view of \cite[pp.\ 9]{mp3} and \cite[pp.\ 14--15]{mao}, we have
\begin{align*}
  \frac{(\frac23)_{\frac{p-1}2}}
  {(\frac13)_{\frac{p-1}3}(\frac{p}3+1)_{\frac{p-1}6}}
  &\equiv 4x^2-2p\quad\pmod{p^2},\\
  \frac{(\frac23)_{\frac{p-1}2}}{(\frac13)_{\frac{p-1}2}}
  \sum_{k=1}^{\frac{p-1}2}\frac{(\frac13)_k}{k(\frac23)_k}&\equiv 0\quad\pmod{p}.
\end{align*}
Hence,
\begin{align*}
p\sum_{j=0}^{\frac{p-1}2}\frac{\binom{2j}j^2(H_{2j}-H_j)}{(3j+4)16^j}&\equiv \frac9{10}\frac{p(\frac23)_{\frac{p-1}2}}{\frac{3p-5}2\frac{3p-1}2\frac{3p+5}2(\frac13)_{\frac{p-1}2}}\\
\equiv-\frac9{10}\frac{4}{25}(4x^2-2p)&=-\frac{18}{125}(4x^2-2p)\quad\pmod{p^2}.
\end{align*}
The proof of Lemma \ref{h2jhj3j+4} is complete.
\end{proof}
\section{Proof of Theorem \ref{Th1.1}}\label{sec:thm}
\setcounter{lemma}{0}
\setcounter{theorem}{0}
\setcounter{corollary}{0}
\setcounter{remark}{0}
\setcounter{equation}{0}
\setcounter{conjecture}{0}
Our proof of Theorem \ref{Th1.1} heavily relies on the following
two transformation formulas due to H.-H.~Chan and W.~Zudilin \cite{CZ}
and Z.-H.~Sun \cite{sund} respectively,
 \begin{align}\label{2.0}\sum_{k=0}^n\binom{n}k^2\binom{2k}k\binom{2n-2k}{n-k}=\sum_{k=0}^{n}(-1)^k\binom{n+2k}{3k}\binom{2k}k^2\binom{3k}k16^{n-k},
 \end{align}
 \begin{align}\label{2.1}\sum_{k=0}^n\binom{n}k^2\binom{2k}k\binom{2n-2k}{n-k}=\sum_{k=0}^{\lfloor n/2\rfloor}\binom{n+k}{3k}\binom{2k}k^2\binom{3k}k4^{n-2k}.
 \end{align}
 \noindent{\it Proof of Theorem \ref{Th1.1}}. We first consider
 the first congruence in Theorem \ref{Th1.1} in the case
 $p=x^2+3y^2\equiv1\pmod3$. By \eqref{2.1}, we have
\begin{align*}
\sum_{k=0}^{p-1}k^3\frac{D_k}{4^k}&=\sum_{k=0}^{p-1}\frac{k^3}{4^k}\sum_{j=0}^{\lfloor k/2\rfloor}\binom{k+j}{3j}\binom{2j}j^2\binom{3j}j4^{k-2j}\notag\\
&=\sum_{j=0}^{(p-1)/2}\frac{\binom{2j}j^2\binom{3j}j}{16^j}\sum_{k=2j}^{p-1}k^3\binom{k+j}{3j}.
\end{align*}
 By using \texttt{Sigma}, we establish the following identity:
\begin{align*}
\sum_{k=2j}^{n-1}k^3\binom{k+j}{3j}=\frac{\Sigma_1}{(j+1)(3j+2)(3j+4)}\binom{n+j}{3j+1},
\end{align*}
where
\begin{align*}
\Sigma_1&=j(j+1)(j+3)+n(2-j^2)(3j+1)-n^2(j+2)(3j+1)(3j+2)\\
&\quad\;+n^3(j+1)(3j+1)(3j+2).
\end{align*}
Thus,
\begin{align*}
  \sum_{k=0}^{p-1}k^3\frac{D_k}{4^k}
  =\sum_{j=0}^{\frac{p-1}2}\frac{\binom{2k}k^2\binom{3j}j
  \binom{p+j}{3j+1}}{16^k}\frac{\Sigma_1}{(j+1)(3j+2)(3j+4)}.
\end{align*}
Let
$$
\Sigma_2=k(k+1)(k+3)+p(2-k^2)(3k+1)-p^2(k+2)(3k+1)(3k+2).
$$
In view of Lemma \ref{Lem2.2}, we have for $p\equiv1\pmod3$ the supercongruence
\begin{align*}
&\sum_{k=0}^{p-1}k^3\frac{D_k}{4^k}\notag\equiv\sum_{k=0}^{\frac{p-1}2}\frac{\binom{2k}k^2\binom{3k}k\binom{p+k}{3k+1}}{16^k}\frac{\Sigma_2}{(k+1)(3k+2)(3k+4)}\\
&\equiv\sum_{k=0}^{\frac{p-1}2}\frac{\binom{2k}k^2}{16^k}\frac{p(1-pH_{2k}+pH_k)\Sigma_2}{(k+1)(3k+1)(3k+2)(3k+4)}+S_1\quad\pmod{p^3},
\end{align*}
where $S_1$ is defined by the following expression with $k=(p-4)/3$,
\begin{align*}
S_1&=\big(k(k+1)(k+3)+(2-k^2)(3k+1)p-(k+2)(3k+1)(3k+2)p^2\big)\\
   &\quad\;\times\frac{\binom{-\frac12}{k}^2}
     {(k+1)(3k+2)}\left(\frac{\binom{3k}{k}\binom{p+k}{3k+1}}{3k+4}
-\frac{1-pH_{2k}+pH_k}{3k+1}\right).
\end{align*}
In view of \cite[Equation~(3.5)]{s2018} and \cite{mp3}, we have
\begin{gather}
  \sum_{k=0}^{\frac{p-1}2}\frac{\binom{2k}k^2}{(k+1)16^k}\equiv0\pmod{p^2},\quad
  \sum_{k=0}^{\frac{p-1}2}\frac{\binom{2k}k^2(H_{2k}-H_k)}{(3k+1)16^k}
  \equiv0\pmod p,\label{zhongyao1}\\
  \frac23\sum_{k=0}^{\frac{p-1}2}\frac{\binom{2k}k^2(H_{2k}-H_k)}{(3k+2)16^k}
  \equiv\sum_{k=0}^{\frac{p-1}2}\frac{p\binom{2k}k^2}{(3k+2)16^k}\equiv-\frac{p^2}{x^2}\pmod{p^3}\label{zhongyao2}.
\end{gather}
Hence by Lemma \ref{fuzhu} and \cite[Theorem 1.2]{mao}, we have
\begin{align*}
&\sum_{k=0}^{p-1}k^3\frac{D_k}{4^k}+\frac{184p^2x^2}{125}\equiv\frac{p}{27}\sum_{k=0}^{\frac{p-1}2}\frac{\binom{2k}k^2}{16^k}\left(\frac{-8}{3k+1}+\frac{21}{3k+2}-\frac{10}{3k+4}\right)\\
&\quad\;+\frac{p^2}3\sum_{k=0}^{\frac{p-1}2}\frac{\binom{2k}k^2}{16^k}\left(\frac{-3}{k+1}+\frac{7}{3k+2}+\frac{1}{3k+4}\right)\\
&\quad\;-p^3\sum_{k=0}^{\frac{p-1}2}\frac{\binom{2k}k^2}{16^k}\left(\frac{1}{k+1}-\frac{2}{3k+4}\right)\\
&\quad\;-\frac{p^2}{27}\sum_{k=0}^{\frac{p-1}2}\frac{\binom{2k}k^2(H_{2k}-H_k)}{16^k}\left(\frac{-8}{3k+1}+\frac{21}{3k+2}-\frac{10}{3k+4}\right)\\
&\quad\;-\frac{p^3}3\sum_{k=0}^{\frac{p-1}2}\frac{\binom{2k}k^2(H_{2k}-H_k)}{16^k}\left(\frac{-3}{k+1}+\frac{7}{3k+2}+\frac{1}{3k+4}\right)\\
&\equiv\left(-\frac8{27}-\frac{10}{27}\frac4{25}\right)\left(4x^2-2p-\frac{p^2}{4x^2}\right)-\frac{21}{27}\frac{p^2}{x^2}+\frac{p}3\frac{4}{25}(4x^2-2p)\\
&\quad\;+2p^2\frac{16x^2}{25}-\frac{10p}{27}\frac{18}{125}(4x^2-2p)+\frac{21}{27}\frac32\frac{p^2}{x^2}+\frac{p^2}3\frac{18}{125}4x^2\\
&\equiv-\frac{64x^2}{45}+\frac{32p}{45}+\frac{43p^2}{90x^2}+\frac{184p^2x^2}{125}\quad\pmod{p^3}.
\end{align*}
Thus we immediately obtain the desired result
\begin{equation}\label{congruence1}
\sum_{k=0}^{p-1}k^3\frac{D_k}{4^k}\equiv-\frac{64x^2}{45}+\frac{32p}{45}+\frac{43p^2}{90x^2}\quad\pmod{p^3}.
\end{equation}
Now we are ready to prove the case $p\equiv2\pmod3$ with $p>5$.
Similar to before,
\begin{align*}
&\sum_{k=0}^{p-1}k^3\frac{D_k}{4^k}\notag\equiv\sum_{k=0}^{\frac{p-1}2}\frac{\binom{2k}k^2\binom{3k}k\binom{p+k}{3k+1}}{16^k}\frac{k(k+1)(k+3)+p(2-k^2)(3k+1)}{(k+1)(3k+2)(3k+4)}\\
&\equiv\sum_{k=0}^{\frac{p-1}2}\frac{\binom{2k}k^2}{16^k}\frac{p(1-pH_{2k}+pH_k)(k(k+1)(k+3)+p(2-k^2)(3k+1))}{(k+1)(3k+1)(3k+2)(3k+4)}\\
&\equiv\frac{p}{27}\sum_{k=0}^{\frac{p-1}2}\frac{\binom{2k}k^2}{16^k}\left(\frac{-8}{3k+1}+\frac{21}{3k+2}-\frac{10}{3k+4}\right)\\
&\quad\;+\frac{p^2}3\sum_{k=0}^{\frac{p-1}2}\frac{\binom{2k}k^2}{16^k}\left(\frac{-3}{k+1}+\frac{7}{3k+2}+\frac{1}{3k+4}\right)\\
&\quad\;-\frac{p^2}{27}\sum_{k=0}^{\frac{p-1}2}\frac{\binom{2k}k^2(H_{2k}-H_k)}{16^k}\left(\frac{-8}{3k+1}+\frac{21}{3k+2}-\frac{10}{3k+4}\right)\;\pmod{p^2}.
\end{align*}
In view of \cite{mp3}, we have
$$\sum_{k=0}^{\frac{p-1}2}\frac{\binom{2k}k^2}{(3k+1)16^k}\equiv0\quad\pmod p,$$
and in view of Lemma \ref{3j+4p^3}, we have
$$
\sum_{k=0}^{\frac{p-1}2}\frac{\binom{2k}k^2}{(3k+4)16^k}\equiv0\quad\pmod p.
$$
Thus,
\begin{align*}
\sum_{k=0}^{p-1}k^3\frac{D_k}{4^k}&\equiv\frac{7p}{9}\sum_{k=0}^{\frac{p-1}2}\frac{\binom{2k}k^2}{(3k+2)16^k}+\frac{7p^2}3\sum_{k=0}^{\frac{p-1}2}\frac{\binom{2k}k^2}{(3k+2)16^k}\\
&\quad\;-\frac{7p^2}{9}\sum_{k=0}^{\frac{p-1}2}\frac{\binom{2k}k^2(H_{2k}-H_k)}{(3k+2)16^k}\quad\pmod{p^2}.
\end{align*}
In view of \cite[Equation~(4.2)]{mp3}, we have
\begin{align}\label{yaoyong1}
\sum_{k=0}^{\frac{p-1}2}\frac{\binom{2k}k^2(H_{2k}-H_k)}{16^k(3k+2)}\equiv3\sum_{k=0}^{\frac{p-1}2}\frac{\binom{2k}k^2}{16^k(3k+2)}\quad\pmod p,
\end{align}
and
\begin{equation}\label{yaoyong2}
p\sum_{k=0}^{\frac{p-1}2}\frac{\binom{2k}k^2}{(3k+2)16^k}\equiv4R_3(p)\quad\pmod{p^2}.
\end{equation}
Hence
\begin{equation}\label{congruence31}
\sum_{k=0}^{p-1}k^3\frac{D_k}{4^k}\equiv\frac{28}9R_3(p)\quad\pmod{p^2}.
\end{equation}

\noindent Now we consider the other congruences in Theorem \ref{Th1.1}.
Similar to above, by \eqref{2.0}, we have
\begin{align*}
\sum_{k=0}^{p-1}k^3\frac{D_k}{16^k}&=\sum_{k=0}^{p-1}\frac{k^3}{16^k}\sum_{j=0}^{k}(-1)^j\binom{k+2j}{3j}\binom{2j}j^2\binom{3j}j16^{k-j}\notag\\
&=\sum_{j=0}^{p-1}\frac{\binom{2j}j^2\binom{3j}j}{(-16)^j}\sum_{k=j}^{p-1}k^3\binom{k+2j}{3j}.
\end{align*}
By using \texttt{Sigma}, we establish the following identity:
\begin{align*}
\sum_{k=j}^{n-1}k^3\binom{k+2j}{3j}=\frac{\Sigma_3}{(3j+2)(3j+4)}\binom{n+2j}{3j+1},
\end{align*}
where
$$
\Sigma_3=j(1+2j)+2n(j+1)(3j+1)-2n^2(3j+1)(3j+2)+n^3(3j+1)(3j+2).
$$
Let
$$
\Sigma_4=j(1+2j)+2p(j+1)(3j+1)-2p^2(3j+1)(3j+2).
$$
Thus, if $p\equiv1\pmod3$, then modulo $p^3$, we have
\begin{align*}
&\sum_{k=0}^{p-1}k^3\frac{D_k}{16^k}-\frac1{18p(p+1)}\binom{-1/2}{\frac{2p-2}3}^2\binom{2p-2}{\frac{2p-2}3}\binom{p+\frac{4p-4}3}{2p-1}\\
&\equiv\sum_{j=0}^{\frac{p-1}2}\frac{\binom{2j}j^2\binom{3j}j\binom{p+2j}{3j+1}}{(-16)^j}\frac{\Sigma_4}{(3j+2)(3j+4)}\\
  &\equiv\sum_{j=0}^{\frac{p-1}2}\frac{\binom{2j}j^2p(-1)^j
    (1+pH_{2j}-pH_j)}{(-16)^j}\frac{\Sigma_4}{(3j+1)(3j+2)(3j+4)}+S_2,
\end{align*}
where $S_2$ is defined by the following expression with $k=(p-4)/3$,
\begin{align*}
&S_2=-\frac{\binom{-\frac12}{k}^2(k(1+2k)+2p(k+1)(3k+1)-2p^2(3k+1)(3k+2))}{3k+2}\\
&\quad\;\times\left(\frac{\binom{3k}{k}\binom{p+2k}{3k+1}}{3k+4}+\frac{1+pH_{2k}-pH_k}{3k+1}\right).
\end{align*}
Hence, similar to above, by \eqref{zhongyao1}, \eqref{zhongyao2}
and Lemma \ref{fuzhu}, we have
\begin{align*}
&\sum_{k=0}^{p-1}k^3\frac{D_k}{16^k}-\frac1{18p(p+1)}\binom{-1/2}{\frac{2p-2}3}^2\binom{2p-2}{\frac{2p-2}3}\binom{p+\frac{4p-4}3}{2p-1}\\
&\equiv\frac{p}{27}\sum_{j=0}^{\frac{p-1}2}\frac{\binom{2j}j^2}{16^j}\left(\frac{-1}{3j+1}+\frac{-3}{3j+2}+\frac{10}{3j+4}\right)\\
&\quad\;+\frac{p^2}3\sum_{j=0}^{\frac{p-1}2}\frac{\binom{2j}j^2}{16^j}\left(\frac{1}{3j+2}+\frac1{3j+4}\right)-2p^3\sum_{j=0}^{\frac{p-1}2}\frac{\binom{2j}j^2}{(3j+4)16^j}\\
&\quad\;+\frac{p^2}{27}\sum_{j=0}^{\frac{p-1}2}\frac{\binom{2j}j^2}{16^j}\left(\frac{-1}{3j+1}+\frac{-3}{3j+2}+\frac{10}{3j+4}\right)(H_{2j}-H_j)\\
&\quad\;+\frac{p^3}3\sum_{j=0}^{\frac{p-1}2}\frac{\binom{2j}j^2}{16^j}\left(\frac{1}{3j+2}+\frac1{3j+4}\right)(H_{2j}-H_j)+\frac{184p^2x^2}{125}\\
&\equiv\left(-\frac1{27}+\frac{10}{27}\frac4{25}\right)\left(4x^2-2p-\frac{p^2}{4x^2}\right)+\frac{1}{9}\frac{p^2}{x^2}+\frac{p}3\frac{4}{25}(4x^2-2p)\\
&\quad\;-2p^2\frac{16x^2}{25}-\frac{10p}{27}\frac{18}{125}(4x^2-2p)+\frac{1}{9}\frac32\frac{p^2}{x^2}-\frac{p^2}3\frac{18}{125}4x^2+\frac{184p^2x^2}{125}\\
&\equiv\frac{4x^2}{45}-\frac{2p}{45}+\frac{49p^2}{180x^2}\quad\pmod{p^3}.
\end{align*}
It is easy to see that
$$
\binom{2p-2}{\frac{2p-2}3}\binom{p+\frac{4p-4}3}{2p-1}\equiv-2p\quad\pmod{p^2}.
$$
In view of \cite[pp. 18]{mp3}, we have
$$\binom{-1/2}{\frac{2p-2}3}^2\equiv\frac{9p^2}{4x^2}\quad\pmod{p^3}.$$
These yield
\begin{align}\label{congruence3}
\sum_{k=0}^{p-1}k^3\frac{D_k}{16^k}&\equiv\frac{4x^2}{45}-\frac{2p}{45}+\frac{49p^2}{180x^2}-\frac{p^2}{4x^2}\notag\\
&=\frac{4x^2}{45}-\frac{2p}{45}+\frac{p^2}{45x^2}\quad\pmod{p^3}.
\end{align}
If $p\equiv2\pmod3$ with $p>5$ (the case $p=5$ can be checked directly), then modulo $p^2$, we have
\begin{align*}
&\sum_{k=0}^{p-1}k^3\frac{D_k}{16^k}\\
&\equiv\sum_{j=0}^{\frac{p-1}2}\frac{\binom{2j}j^2}{16^j}\frac{pj(1+2j)+2p^2(j+1)(3j+1)+p^2j(2j+1)(H_{2j}-H_j)}{(3j+1)(3j+2)(3j+4)}.
\end{align*}
Hence, similar to above, we have
\begin{align*}
\sum_{k=0}^{p-1}k^3\frac{D_k}{16^k}&\equiv\frac{p}{27}\sum_{j=0}^{\frac{p-1}2}\frac{\binom{2j}j^2}{16^j}\left(\frac{-1}{3j+1}+\frac{-3}{3j+2}+\frac{10}{3j+4}\right)\\
&\quad\;+\frac{p^2}3\sum_{j=0}^{\frac{p-1}2}\frac{\binom{2j}j^2}{16^j}\left(\frac{1}{3j+2}+\frac1{3j+4}\right)\\
&\quad\;+\frac{p^2}{27}\sum_{j=0}^{\frac{p-1}2}\frac{\binom{2j}j^2}{16^j}\left(\frac{-1}{3j+1}+\frac{-3}{3j+2}+\frac{10}{3j+4}\right)(H_{2j}-H_j)\\
&\equiv-\frac{p}9\sum_{j=0}^{\frac{p-1}2}\frac{\binom{2j}j^2}{(3j+2)16^j}+\frac{p^2}3\sum_{j=0}^{\frac{p-1}2}\frac{\binom{2j}j^2}{(3j+2)16^j}\\
&\quad\;-\frac{p^2}9\sum_{j=0}^{\frac{p-1}2}\frac{\binom{2j}j^2}{(3j+2)16^j}(H_{2j}-H_j)\\
&\equiv-\frac{p}9\sum_{j=0}^{\frac{p-1}2}\frac{\binom{2j}j^2}{(3j+2)16^j}\equiv-\frac49R_3(p)\quad\pmod{p^2}.
\end{align*}
This, together with \eqref{congruence1}, \eqref{congruence31} and \eqref{congruence3},
completes the proof of Theorem \ref{Th1.1}.\qed

%\Ack. The first author is funded by the National Natural Science Foundation of China (12001288) and China Scholarship Council (202008320187).

\end{document}